\definecolor{gray9}{gray}{0.9}
\definecolor{gray8}{gray}{0.8}
\definecolor{gray7}{gray}{0.7}
\definecolor{gray6}{gray}{0.6}
\definecolor{darkblue}{rgb}{0.0,0.0,0.3}
\newcommand{\cS}{{\mathcal S}}
\newcommand{\Z}{\mathbb{Z}}
\newcommand{\N}{\mathbb{N}}
\newcommand{\id}{()}
\newcommand{\B}[1]{\mathbf{#1}}
\newcommand{\PowSet}{{\mathcal P}}
\newcommand{\Ind}{{\mathcal I}}
\newcommand{\IndMax}{{\mathcal I}_{\text{max}}}
\newcommand{\IndGen}{{\mathcal I}_{\langle\rangle}}
\DeclareMathOperator{\Stab}{Stab}
\DeclareMathOperator{\diam}{diam}
\newcommand{\todo}[1]{}
\theoremstyle{plain}
\newtheorem{theorem}{Theorem}[section]
\newtheorem{lemma}[theorem]{Lemma}
\newtheorem{fact}[theorem]{Fact}
\newtheorem{Proposition}[theorem]{Proposition}
\theoremstyle{definition}
\newtheorem{definition}[theorem]{Definition}
\newcommand{\SubSemi}{\textsc{SubSemi}}
\newcommand{\BioGAP}{\textsc{BioGAP}}
\newcommand{\GAP}{\textsc{Gap}}
\newcommand{\Magma}{\textsc{Magma}}
\begin{document}
\title[Independent Generating Sets of Symmetric Groups]{Computational Enumeration of Independent Generating Sets of Finite Symmetric Groups}
\author[A. Egri-Nagy, V. Gebhardt]{Attila Egri-Nagy$^1$ \and Volker Gebhardt$^2$}
\address{$^1$Akita International University, Yuwa, Akita-city
  010-1292, Japan}
\email{attila@egri-nagy.hu}
\address{$^2$Centre for Research in Mathematics, School of Computing, Engineering and Mathematics, Western Sydney University, Locked Bag 1797, Penrith, NSW 2751, Australia}
\email{v.gebhardt@westernsydney.edu.au}
\maketitle

\begin{abstract}We developed computer algebra tools for enumerating
conjugacy classes of independent subsets and generating sets of symmetric groups
up to $n=7$, and carried out an initial analysis of the obtained results.
\end{abstract}

\section{Introduction}

Generating sets are an efficient way of defining algebraic structures, thus it is natural to require them to be as concise as possible.
This minimality condition can be formulated as independence, that none of the generators can be constructed by the remaining elements of the generating set.

Symmetric groups $\cS_n$, the group of all permutations of degree $n$, are central objects in mathematics since all groups can be represented as a subgroup of some permutation group.
Asymptotically, any pair $(x,y)$, $x,y\in\cS_n$ will generate the
alternating group or the symmetric group, with probability
$\frac{3}{4}$ \cite{DixonProb1969}.
Except $n=4$, we can always construct a permutation that will generate $\cS_n$ together with a given permutation \cite{Piccard39,IsaacsZieschang1995}.
In $\cS_n$, the maximal size of an independent set is $n-1$ and in
that case it is a generating set for $\cS_n$
\cite{Whiston2000}. These maximal independent generating sets can be
explicitly constructed \cite{Cameron2002}.


\section{Basic definitions and notation}
The set $\{1,2,\ldots, n\}$ is denoted by $\B{n}$.
The power set of a set $A$ is denoted by $\PowSet(A)$.

A \emph{generating set} of a group $G$ is a subset $A\subseteq G$ such that every element of $G$ can be expressed as a combination  of elements of $A$, denoted by $\langle A \rangle=G$.
Here we do not include the inverses of the generators tacitly, so we generate the group \emph{as a semigroup}.
Alternatively, we say that $\langle A \rangle$, the \emph{subgroup generated by $A$}, is the intersection of all subgroups of $G$ containing $A$.
We consider generating sets essentially the same if they are conjugate.
\begin{definition}
Let $A$ and $B$ subsets of $G$. Then $A$ is \emph{conjugate} to $B$ in $G$, denoted by $A\sim B$ iff there exists $g\in G$ such that $A=g^{-1}Bg$.
\end{definition}
\noindent In $\cS_n$ conjugacy corresponds to simultaneous relabelling of the points of the permutations.

Similar to linear independence in vector spaces, we can define
independence for sets of permutations.

\begin{definition}[Independent set of permutations]
  \label{def:indset}
Let $A$ be a set of permutations. It is \emph{independent} if
$a\notin \langle A\setminus\{a\} \rangle$ for all
$a\in A$.
\end{definition}
A set of permutations may or may not be independent, depending on the actual algebraic structure it is the subset of. The set $\{\id\}$ containing the identity only is not independent in $\cS_n$ considered \emph{as a group}, since by the group axioms the empty set generates the trivial group. However, in $\cS_n$ \emph{as a semigroup}, it is independent.
In this paper we consider independent sets in $\cS_n$ as a semigroup.

Removing elements from an independent set keeps the set independent.
\begin{fact}
Any nonempty subset of an independent set is an independent set.
\end{fact}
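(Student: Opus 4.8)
The plan is to argue by contradiction, reducing the independence of a subset to the independence of the ambient set via the monotonicity of the generated-subsemigroup operator. Let $A$ be independent and let $B$ be a nonempty subset of $A$; I want to show that $b\notin\langle B\setminus\{b\}\rangle$ for every $b\in B$.

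First I would record the monotonicity property: if $X\subseteq Y\subseteq G$ then $\langle X\rangle\subseteq\langle Y\rangle$. This is immediate from the characterisation of $\langle X\rangle$ as the intersection of all subsemigroups of $G$ containing $X$: every subsemigroup containing $Y$ also contains $X$, so the family of subsemigroups over which we intersect to obtain $\langle X\rangle$ is at least as large as the one defining $\langle Y\rangle$, and hence $\langle X\rangle\subseteq\langle Y\rangle$.

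Next, suppose towards a contradiction that some $b\in B$ satisfies $b\in\langle B\setminus\{b\}\rangle$. Since $B\subseteq A$, removing $b$ from both sides gives $B\setminus\{b\}\subseteq A\setminus\{b\}$, so by monotonicity $\langle B\setminus\{b\}\rangle\subseteq\langle A\setminus\{b\}\rangle$, whence $b\in\langle A\setminus\{b\}\rangle$. But $b\in B\subseteq A$, so this contradicts Definition~\ref{def:indset} applied to $A$. Therefore no such $b$ exists, and $B$ is independent.

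There is no genuine obstacle here; the only points that warrant a moment's care are that $b$ is indeed an element of $A$ (so that the independence hypothesis on $A$ is applicable to it) and that the inclusion $B\setminus\{b\}\subseteq A\setminus\{b\}$ is not spoiled by the removal of $b$ from both sides — both are trivial once made explicit. Note that nonemptiness of $B$ plays no role in the argument (the empty set is vacuously independent as well); it is included in the statement only for compatibility with the conventions used elsewhere in the paper.
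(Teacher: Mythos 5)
Your proof is correct: the paper states this as a Fact without proof, and your argument via monotonicity of $\langle\cdot\rangle$ (as an intersection of subsemigroups) plus the observation that $B\setminus\{b\}\subseteq A\setminus\{b\}$ is exactly the standard justification the authors leave implicit. Your side remark that nonemptiness is not actually needed is also accurate under the paper's semigroup convention.
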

\noindent On the contrary, adding elements and keeping the set independent is difficult, even
when we use information from the underlying permutation representation.
Let $A$ be a independent set of permutations acting on $\B{n-1}$, and
$b$ a permutation such that $b\notin\Stab_n$. Then $A\cup\{b\}$ is not guaranteed to be independent.
For example, adding $(14)$ to $\{(12),(23)\}$ yields an independent set, but adding
$(14)$ to $\{(12),(123)\}$ does not since $\{(123),(14)\}$ already
generates $\cS_4$ thus $(12)$ becomes redundant.

\begin{definition}
Let $\Ind(G)$ denote the set of independent subsets of a group $G$. An independent set $A\subseteq G$ is \emph{maximal} if there is no $B\in \Ind(G)$ such that $A\subset B$. The set of all maximal independent sets of $G$ is  denoted by $\IndMax(G)$. The set of independent generating sets of $G$ is denoted by $\IndGen(G)$.
\end{definition}

By definition, $ \PowSet(G)\supseteq \Ind(G) \supseteq \IndMax(G)\supseteq \IndGen(G)$, since an independent generating set of $G$ is necessarily maximal.
Here we study $\IndGen(\cS_n)$, the independent generating sets of the symmetric group.

\section{The folklore}
There are a few standard generating sets for the symmetric group.
For a concise overview and wider context see \cite{KeithConradGeneratingSets}.

\begin{Proposition}[All transpositions]
  \label{thm:alltranspos}
The set of all transpositions $(ij)$, where $i\neq j$ and $i,j\in\B{n}$, generate $\cS_n$.
\end{Proposition}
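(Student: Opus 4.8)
The plan is to show that every permutation can be written as a product of transpositions, which suffices since the transpositions lie in $\cS_n$ and $\cS_n$ is closed under composition. First I would recall that every permutation decomposes into a product of disjoint cycles, so by multiplicativity it is enough to express a single cycle as a product of transpositions. For a $k$-cycle I would verify the identity
\[
(a_1\, a_2\, \cdots\, a_k) = (a_1\, a_2)(a_2\, a_3)\cdots(a_{k-1}\, a_k),
\]
reading the product in the paper's left-to-right (semigroup) convention and checking on each point $a_i$ that the right-hand side acts correctly; this is a routine verification. Since the identity equals $(12)(12)$, it too is a product of transpositions, so every element of $\cS_n$ is obtained.

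A second, perhaps cleaner, route is to induct on $n$: a permutation $\sigma\in\cS_n$ either fixes $n$, in which case it lies in the copy of $\cS_{n-1}$ and we invoke the inductive hypothesis, or sends $n\mapsto j$ with $j\neq n$, in which case $(jn)\sigma$ fixes $n$ and the inductive hypothesis applies to that product, giving $\sigma$ as a product of transpositions after multiplying back by $(jn)$. The base case $n\le 2$ is immediate.

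Neither argument presents a genuine obstacle; the only point requiring care is bookkeeping the composition order consistently with the ``generate as a semigroup'' convention fixed in Section~2, and noting that we never need inverses because transpositions are involutions. I would present the cycle-decomposition argument as the main proof and perhaps remark that the set of all transpositions is far from independent (any transposition $(ij)$ with $i,j>2$ equals $(1i)(1j)(1i)$), which motivates the refinements discussed next.
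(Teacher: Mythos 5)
Your main argument is exactly the paper's: decompose a permutation into disjoint cycles and write each cycle as a product of the transpositions of consecutive entries; the only discrepancy is that the paper lists the factors in the opposite order, $(i_{k-1}i_k)\cdots(i_2i_3)(i_1i_2)$, which is the version that checks out under a left-to-right reading of products, whereas your displayed order $(a_1a_2)(a_2a_3)\cdots(a_{k-1}a_k)$ is the one valid under the usual right-to-left convention --- so the pointwise verification you promise is where you would fix the bookkeeping, and in any case both orderings use the same transpositions, so generation is unaffected. The inductive alternative and the closing remark on non-independence are correct but not needed.
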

\begin{proof}
A general element of $\cS_n$ is a product of disjoint cycles, therefore it is enough to show that an arbitrary cycle can be built using transpositions.
Let $(i_1i_2\ldots i_k)$ be a general cycle, then
$$ (i_1i_2\ldots i_k)= (i_{k-1}i_k)(i_{k-2}i_{k-1})\cdots(i_2i_3)(i_1i_2).$$
\end{proof}
\noindent There are $\binom{n}{2}=\frac{n(n-1)}{2}$ transpositions in $\cS_n$, hence it is not an independent generating set.

\begin{Proposition}[Base point transpositions]
  \label{thm:basepoint}
Let $i$ be a chosen base point and the set of transpositions of the form $(ik)$ with $k\in \B{n}$, $k\neq i$ called the \emph{base point transpositions}. This set of $n-1$ transpositions generates $\cS_n$.
\end{Proposition}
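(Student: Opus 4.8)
The plan is to reduce the statement to Proposition~\ref{thm:alltranspos}. Since every element of $\cS_n$ is a product of transpositions, it suffices to express an arbitrary transposition $(jk)$ as a product of base point transpositions $(ik)$ with $k\in\B{n}$, $k\neq i$. A preliminary remark: although we are generating as a semigroup, this causes no difficulty here, because transpositions are involutions, so every word we are about to write is already a positive word and the subsemigroup and the subgroup generated by the base point transpositions coincide.

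First I would dispose of the trivial case: if $i\in\{j,k\}$, then $(jk)$ is itself a base point transposition and there is nothing to prove (this also covers $n\le 2$, where the claim is immediate). So assume $n\ge 3$ and $i\notin\{j,k\}$. The key step is then the conjugation identity
$$(jk) = (ij)\,(ik)\,(ij),$$
which one verifies by tracking the action on $\B{n}$, or equivalently from the general rule $g(ab)g^{-1}=\bigl(g(a)\,g(b)\bigr)$ applied with $g=(ij)$ and $(ij)^{-1}=(ij)$. The right-hand side is a product of three base point transpositions, so $(jk)$ lies in the subsemigroup they generate.

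Combining the two cases with Proposition~\ref{thm:alltranspos} shows that the base point transpositions generate $\cS_n$; and since there are exactly $n-1$ of them, one for each choice of $k\neq i$, the stated size is immediate. I do not expect any genuine obstacle: the only point requiring a word of care is the semigroup-versus-group distinction noted in the first paragraph, which is settled by the fact that the generators are involutions.
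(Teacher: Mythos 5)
Your proof is correct and follows essentially the same route as the paper: both reduce to Proposition~\ref{thm:alltranspos} via the conjugation identity $(ij)(ik)(ij)=(jk)$ expressing an arbitrary transposition in base point transpositions. Your extra remarks on the trivial case and on the semigroup-versus-group issue are fine but not a different argument.
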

\begin{proof}
Without loss of generality we can fix the base point to be 1.
We can express a general transposition by base point transpositions.
$$ (1j)(1i)(1j)=(ij),$$
therefore by Proposition \ref{thm:alltranspos} we can generate $\cS_n$.
\end{proof}

\begin{Proposition}[Transposition chain]
The generating set $\{(12),(2,3),\ldots, (n-1,n)\}$, the \emph{chain of transpositions} generates $\cS_n$.
\end{Proposition}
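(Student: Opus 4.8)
The plan is to reduce the claim to Proposition~\ref{thm:basepoint}: I will show that every base point transposition $(1k)$ with $k\in\B{n}$, $k\neq 1$, already lies in the subsemigroup generated by the chain $\{(12),(23),\ldots,(n-1,n)\}$, and then conclude immediately from that proposition that the chain generates $\cS_n$.

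The core of the argument is a short induction on $k$. The base case $k=2$ is trivial, since $(12)$ is itself a member of the chain. For the inductive step, suppose $(1,k-1)$ has already been exhibited inside the generated subsemigroup for some $k$ with $3\le k\le n$; conjugating by the adjacent transposition $(k-1,k)$, which fixes $1$ and interchanges $k-1$ and $k$, gives $(k-1,k)\,(1,k-1)\,(k-1,k)=(1,k)$, and since $(k-1,k)$ is a generator this shows $(1,k)$ is generated as well. Iterating from $k=2$ up to $k=n$ produces all $n-1$ base point transpositions $(12),(13),\ldots,(1n)$, so Proposition~\ref{thm:basepoint} applies.

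One point deserves a remark rather than being an obstacle: we are generating $\cS_n$ as a semigroup, so a priori we may not freely use inverses of the generators. This causes no difficulty here, because each generator $(j,j+1)$ is an involution and hence equal to its own inverse; in particular the conjugate $(k-1,k)\,(1,k-1)\,(k-1,k)$ used above is a genuine semigroup word in the chain as soon as $(1,k-1)$ is one, and the same observation legitimises the semigroup word for a general transposition built inside the proof of Proposition~\ref{thm:basepoint}.

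For completeness I mention an alternative route by induction on $n$: the truncated chain $\{(12),\ldots,(n-2,n-1)\}$ generates the copy of $\cS_{n-1}$ on $\B{n-1}$, which acts transitively on $\B{n-1}$, so conjugating the extra generator $(n-1,n)$ by elements of that copy yields every transposition $(k,n)$ with $k\le n-1$; these are the base point transpositions for base point $n$, and Proposition~\ref{thm:basepoint} finishes. I expect the direct argument above to be cleaner to write, the only thing to watch being the semigroup-versus-group bookkeeping just noted.
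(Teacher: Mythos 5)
Your proof is correct and takes essentially the same route as the paper: both arguments reduce to Proposition~\ref{thm:basepoint} by exhibiting each base point transposition $(1k)$ as a palindromic word in the chain generators, and your inductive single-step conjugation $(k-1,k)\,(1,k-1)\,(k-1,k)=(1,k)$ simply unfolds to the explicit word the paper writes down. Your remark that the generators are involutions (so the conjugating words are genuine semigroup words) is a worthwhile clarification but not a different method.
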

\begin{proof}
By using the transpositions in the chain we can generate base point transpositions (Proposition \ref{thm:basepoint}):
  $(1i)=(12)(23)\cdots(i-2\ i-1)(i-1\ i)(i-2\ i-1)\cdots(12).$
\end{proof}

\begin{Proposition}[Transposition and a full cycle]
A full cycle on $n$ points $(i_1i_2\ldots i_n)$ and  a transposition of consecutive points $(i_ji_{j+1})$ in the cycle generates $\cS_n$.
\end{Proposition}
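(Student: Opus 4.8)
The plan is to reduce the statement to the chain of transpositions by exploiting that conjugation in $\cS_n$ is just simultaneous relabelling of points. First I would choose a relabelling $\phi$ that sends $i_j\mapsto 1$, $i_{j+1}\mapsto 2$, and generally $i_{j+m}\mapsto m+1$ (subscripts read modulo $n$). Conjugating by $\phi$ turns the full cycle $(i_1i_2\cdots i_n)$ into a cyclic rotation of $(1\,2\,\cdots\,n)$, hence into $c:=(1\,2\,\cdots\,n)$ itself, and simultaneously turns the given transposition of consecutive points $(i_ji_{j+1})$ into $\tau:=(1\,2)$. Since conjugate generating sets generate conjugate (here, equal) subgroups, it suffices to prove $\langle c,\tau\rangle=\cS_n$.

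The key computation is then that conjugating $\tau$ by powers of $c$ slides its support along the cycle: with points represented by $1,\dots,n$ one has $c^k\,\tau\,c^{-k}=(1+k\ \ 2+k)$, entries taken modulo $n$. Letting $k$ range over $0,1,\dots,n-2$ produces exactly the transpositions $(j\ \ j+1)$ for all $j\in\B{n-1}$, that is, the whole chain $\{(12),(23),\dots,(n-1\ n)\}$. By the Transposition chain proposition, this chain generates $\cS_n$, so $\langle c,\tau\rangle=\cS_n$ and we are done.

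One subtlety deserves attention because we are generating as a semigroup rather than as a group: the elements $c^{-k}$ used above are not immediately available. However, $c$ has finite order $n$, so $c^{-1}=c^{\,n-1}\in\langle c\rangle$, and therefore every conjugate $c^k\tau c^{-k}$ genuinely lies in the semigroup $\langle c,\tau\rangle$; this is the only place where the semigroup convention needs care.

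I expect the main obstacle to be purely bookkeeping rather than conceptual: one must verify that the single relabelling $\phi$ normalises the cycle to $(1\,2\,\cdots\,n)$ \emph{and} carries the chosen pair to $\{1,2\}$ at the same time, which works precisely because the pair is cyclically adjacent in the cycle. If one preferred to avoid the explicit relabelling, an alternative is to argue directly in the original labels that $c^{k}(i_ji_{j+1})c^{-k}=(i_{j+k}i_{j+k+1})$, obtaining all ``cyclically consecutive'' transpositions and hence a relabelled copy of the chain; but the relabelling argument is cleaner.
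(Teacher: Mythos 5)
Your proof is correct and follows essentially the same approach as the paper, which simply states that the transposition can be conjugated by the cycle to produce a transposition chain. You have merely spelled out the details (the relabelling, the explicit computation $c^k\tau c^{-k}$, and the semigroup caveat that $c^{-1}=c^{n-1}$) that the paper leaves implicit.
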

\begin{proof}
The transposition can be conjugated by the cycle to produce a transposition chain.
\end{proof}

These generator sets are very `regular' in a sense and thus easier to study, but by no means all the types of generating sets. Therefore a more systematic enumeration is needed.

\section{Algorithmic Enumeration}

\begin{table}
\begin{center}
\begin{tabular}{|l|r|r|r|r|r|r|r|}
\hline
$n$  & 1 & 2 & 3 & 4 & 5 & 6 & 7\\
\hline
$|\Ind(\cS_n)|$  & 1 & 3 & 16 &  413 & 25346 & 6825268 &  750102585\\
\hline
$|\sfrac{\Ind(\cS_n)}{\sim}|$  & 1 & 3  & 6 &   31 &  258 & 10294 & 155305\\
\hline
$|\sfrac{\IndGen(\cS_n)}{\sim}|$  & 1 & 1 & 2 &  14 & 178& 8621 & 126515\\
\hline
\end{tabular}
\end{center}
\caption{The number of independent sets, their conjugacy classes, and also independent generating sets up to conjugation in $\cS_n$ as a semigroup.}
\label{tab:results}
\end{table}

For $n\leq 3$ we can easily find all independent generating sets of $\cS_n$ by
pen and paper calculations.
For the trivial group $\cS_1$, the  only semigroup generating set is  the singleton set containing the identity (while the only independent group generating set is the empty set).
For $\cS_2\cong\Z_2$, the only independent generating set is $\{(12)\}$.
For $\cS_3$, we have $\sfrac{\IndGen(\cS_3)}{\sim}= \{  \{(12),(23)\}, \{(12),(123)\} \} $, the familiar transposition chain, and the transposition and a full cycle generating sets.

We can use brute force enumeration for $\cS_4$ and $\cS_5$ to get the independent sets by simply enumerating all subsets of size $n-1$ and check them for for independence.
The size of the search space is 12950 and $\approx$199.8 million. For $\cS_6$ the brute force algorithm would need to check $\approx$191 trillion cases, therefore we need a better approach.

A customized depth-first graph search algorithm gives us all the independent
sets in the low degree cases.
The nodes are  sets of permutations and edges correspond to adding a new element to a set.
In other words, the search algorithm traverses the Hasse-diagram of the subset lattice.
The search tree can be cut at dependent sets, since those can never be made independent by adding more elements.

Elements of a conjugacy class of subsets can be calculated easily if
we have a representative element, therefore it is enough and more
efficient to collect only representative subsets. For choosing a
representative element of a conjugacy class, we take its minimal element in lexicographic order.
As further optimization, a pre-calculated lookup table is used to
decide which elements of $\cS_n$ may produce the representative for a
given subset, so we do not need to conjugate by all the $n!$
permutations in $\cS_n$.

Alternatively, the method of canonical construction paths
\cite{McKay1998} can also be used to speed up the search. We say that
the canonical way of extending a set is to add a new maximal element
(in lexicographic order) and reject any other extensions. This method
does not require us to check a newly constructed subset against the
database of already visited subsets. However, in some cases it is
faster to decide independence for a subset than checking the canonicity of its construction.

\section{Preliminary Analysis of Results}

By analyzing the results of the enumeration we can make the following observations:
\begin{enumerate}
\item There are \emph{dead ends}, maximal (by inclusion)
  independent sets in $\cS_n$, so all possible extensions lead to
  dependent sets, but they do not generate $\cS_n$.
\item The proportion of two-element independent generating sets is decreasing as $n$ increases.
\item With the comprehensive enumeration we can define the minimum and maximum diameters for $\cS_n$.
\item Most independent generating sets have no symmetries.
\end{enumerate}
Of course, these observations should be understood with the condition `in the low degree cases'.

\subsection{Dead Ends}
All independent generating sets of $\cS_n$ are maximal independent
sets, since simply there exists no other permutation not generated
already.
However, the converse is not true. Maximality of an independent set
does not guarantee that it will generate the symmetric group.
A trivial example of such a set is the singleton $\{()\}$ generating the trivial group.
Since any added finite permutation would generate the identity, there
is no way to make this independent set bigger.
Non-trivial examples were discovered by the analysis of the search-tree.

\begin{fact}
There exists $n\in\N$ such that $\IndMax(\cS_n)\supsetneq\IndGen(\cS_n)$, i.e.~not all maximal
independent sets of permutations generate the symmetric group.
\end{fact}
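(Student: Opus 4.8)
I would prove the sharper, explicit statement that the pair
$A=\{(12)(34),(13)(24)\}$ is a dead end in $\cS_4$, which yields the Fact with $n=4$.
Write $a=(12)(34)$, $b=(13)(24)$, $c=(14)(23)$. The easy half is that
$A\in\Ind(\cS_4)\setminus\IndGen(\cS_4)$: neither of the involutions $a$, $b$ lies in the
two-element (semi)group generated by the other, and $\langle A\rangle=\{\id,a,b,c\}$ is the
Klein four-group $V$, a proper (indeed normal) subgroup of $\cS_4$. I would also note up front
that for a subset of more than one element, ``generated as a semigroup'' and ``generated as a
group'' coincide inside a finite group, since a single element already forces the identity and
hence all inverses; so all dependence questions below can be decided with ordinary subgroup
theory. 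The substance is to show $A\in\IndMax(\cS_4)$, and because any nonempty subset of an
independent set is independent, this is equivalent to: $A\cup\{g\}$ is dependent for every
$g\in\cS_4\setminus A$. Concretely, $A\cup\{g\}=\{a,b,g\}$ is dependent iff $g\in\langle a,b\rangle$,
or $a\in\langle b,g\rangle$, or $b\in\langle a,g\rangle$.

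So fix $g\in\cS_4\setminus A$. If $g\in V=\langle A\rangle$ we are done, so assume $g\notin V$ and set
$H=\langle A\cup\{g\}\rangle=\langle V,g\rangle$. Exploiting $V\trianglelefteq\cS_4$: if $g$ has order $3$
then $H=V\langle g\rangle=A_4$; if $g$ has order $2$ or $4$ then $g^2\in V$ and $H=V\langle g\rangle$ has
order $8$, i.e.\ $H$ is a Sylow $2$-subgroup of $\cS_4$, a dihedral group of order $8$. In either case
$V$ is a maximal subgroup of $H$, and the plan is to show that one of $\langle b,g\rangle$,
$\langle a,g\rangle$ is the whole of $H$ — that subgroup then contains the omitted generator, so
$A\cup\{g\}$ is dependent.

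If $H=A_4$: a proper subgroup of $A_4$ containing the $3$-cycle $g$ has order a multiple of $3$ and a
proper divisor of $12$, hence order $3$, hence equals $\langle g\rangle$, which has no element of order
$2$; therefore $\langle b,g\rangle$ cannot be proper, so $\langle b,g\rangle=H\ni a$. If $H$ is dihedral
of order $8$: let $z$ be the central involution of $H$, which lies in $V$ because the centre of a
$D_8$ is contained in every order-$4$ subgroup. Any proper subgroup of $H$ that contains $g$ is either
$\langle g\rangle$ or (when $g$ has order $2$) the Klein four-subgroup through $g$; since $g\notin V$,
each such subgroup meets $V$ in a subgroup of $\langle z\rangle$. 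Hence if $\langle a,g\rangle$ were
proper then $a\in V\cap\langle a,g\rangle\subseteq\langle z\rangle$, forcing $a=z$; likewise
$\langle b,g\rangle$ proper would force $b=z$. As $a\ne b$, at most one of these holds, so one of
$\langle a,g\rangle$, $\langle b,g\rangle$ equals $H$ and contains the other generator. This exhausts
all $g$, so $A$ is a maximal independent set of $\cS_4$ that does not generate $\cS_4$.

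The only fiddly point is the $D_8$ case — identifying which element of $V$ is the central involution
of $H=\langle V,g\rangle$ as $g$ ranges over transpositions and $4$-cycles, and confirming that $g$
indeed avoids $\langle a\rangle$, $\langle b\rangle$ and $V$ — but all of this is forced by $|H|=8$
together with $g\notin V$, so no real obstacle arises. Finally, I would add the remark that the
singleton $\{\id\}$ already witnesses $\IndMax(\cS_n)\supsetneq\IndGen(\cS_n)$ for every $n\ge2$; the
purpose of the $\cS_4$ example is to exhibit a \emph{non-trivial} dead end, a maximal independent set
generating a non-trivial proper subgroup, which is consistent with the computed counts in
Table~\ref{tab:results}, where $\IndGen$ already lies strictly inside $\Ind$ for small $n$.
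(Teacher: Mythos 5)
Your proposal is correct, and it proves more than the paper does. The paper's proof simply exhibits three dead ends in $\cS_4$ found by the search --- two generating $D_4$ and your pair generating $\Z_2\times\Z_2$ --- and justifies only the $D_4$ cases, with a one-line appeal to the fact that any $p\in\cS_4\setminus D_4$ together with $(1,2,3,4)$ generates $\cS_4$ (i.e.\ to the maximality of $D_4$ in $\cS_4$); the Klein four-group example is listed without verification. You instead take that third example and give a complete, hand-checkable proof of its maximality: the reduction of maximality to ``$A\cup\{g\}$ is dependent for every $g$'' via the hereditary property of independence, the use of $V\trianglelefteq\cS_4$ to pin down $\langle V,g\rangle$ as $A_4$ or a Sylow $2$-subgroup, and the observation that in the $D_8$ case both $a$ and $b$ would have to equal the central involution for the set to stay independent --- all of this is sound, and the case analysis ($g$ of order $2$, $3$, or $4$ outside $V$) is exhaustive. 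What the paper's route buys is brevity and three examples at once (consistent with the count of $4$ dead ends for $n=4$ in Table~\ref{tab:dead-ends}, the fourth being $\{()\}$); what yours buys is a self-contained argument independent of the computer search. Your closing remark is also apt: as literally stated the Fact is already witnessed by $\{()\}$ for every $n\ge 2$, which the paper itself calls the ``trivial example'' just before the Fact, so the real content --- which your proof supplies --- is the existence of a non-trivial dead end, and $n=4$ is indeed the smallest degree for which one exists.
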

\begin{proof}
The smallest value is $n=4$:
$$\langle   (1,3), (1,2,3,4) \rangle = D_4,$$
$$\langle  (1,2)(3,4), (1,2,3,4) \rangle = D_4,$$
 $$\langle (1,2)(3,4), (1,3)(2,4) \rangle = \Z_2\times\Z_2.$$
For example, in the first two cases, any permutation $p\in \cS_4\setminus D_4$ generates $\cS_4$ with the 4-cycle $(1,2,3,4)$, forcing the other generator to be redundant.
\end{proof}
The number of dead ends is growing monotonically, see Table \ref{tab:dead-ends}.

\begin{table}
\begin{center}
\begin{tabular}{|l|r|r|r|r|r|r|r|}
\hline
  $n$ &1 &2& 3&4&5 &6&7\\
  \hline
  \#dead ends &1 &1& 1& 4 &19 & 278 & 17591 \\
\hline
\end{tabular}
\end{center}
\caption{Number of dead end independent sets in symmetric groups.}
\label{tab:dead-ends}
\end{table}

\subsection{Size distribution of independent sets}

Asymptotically most pairs of permutations generate the symmetric group \cite{DixonProb1969}, and this may suggest that most independent generating sets would be of size 2. However results show, at least at beginning, that the proportion of of the two-element generating sets is decreasing from $n=3$, see Table \ref{tab:sizes}. 
For $n=8$ we can enumerate the distinct 2-element generating sets, and it is also a relatively small number.
The above percentages are for conjugacy class representatives, but the size distribution of the total number of independent generating sets have similar ratios.

\begin{table}
\begin{center}
\begin{tabular}{|c|r|r|r|r|r|r|}
\hline
  & 2&  3 & 4 & 5 & 6 \\
\hline
$\cS_3$  & 2 (100\%) &  &  &  &  \\
\hline
$\cS_4$  & 5 (35.71\%)& 9 &  &   &   \\
\hline
$\cS_5$  & 31 (17.41\%)& 138 & 9 &  &  \\
\hline
$\cS_6$  & 163 (1.89\%)& 6355  &  2059 & 44 &  \\
\hline
$\cS_7$  & 1576 (1.01\%)& 67078 &   54398 &  3415 &  48   \\
\hline
$\cS_8$  & 21912 &  &    &   &    \\
\hline
\end{tabular}
\end{center}
\caption{Size distribution of independent generating sets of $\cS_n$. Percentages indicate the proportion of 2-element generating sets.}
\label{tab:sizes}
\end{table}

\subsection{Speed of Generating Sets}

We can for instance measure the `speed' of generating sets, i.e.~how quickly they can build the elements of the group.
A generating $S$ for a group $G$ is \emph{slow} if $\diam_S(G)$ is maximal across all generating set. Slow implies independent, since adding a generator only shorten words.

\begin{table}
\begin{center}
\begin{tabular}{|l|r|r|r|r|r|r|r|}
\hline
  $n$ &1 &2& 3&4&5 &6&7\\
  \hline
  $\min_{S}\left(\diam_S(\cS_n)\right)$ &0 &1& 2&4&5 &7 & 8\\
  \hline
  $\max_{S}\left(\diam_S(\cS_n)\right)$ &0 &1& 3&7&14 &18&34\\
\hline
\end{tabular}
\end{center}
\caption{Minimal and maximal diameter values of independent generating sets of symmetric groups.}
\label{tab:speed}
\end{table}
For $n=2$ the only generating set is $\{(12)\}$, therefore it is slow.
For $n=3$, $\diam_{\{(12),(23)\}}(\cS_3)=3$, while $\diam_{\{(123),(23)\}}(\cS_3)=2$, so the transposition chain is the slow one.
For $n=4$ and $n=5$ we have unique slow generating sets:$\{(123),(34)\}$ and $\{(1243),(2,3)(4,5)\}$.
For $n=6$, $\{(23)(456), (12)(34)(56) \}$ and $\{(56), (123456)\}$, so the full cycle and the transposition generating set is slower than the transposition chain.

\subsection{Symmetries of independent generating sets}
\begin{table}
\begin{center}
\begin{tabular}{|c|r|r|r|r|r|r|r|r|r|r|}
\hline
 & $1$& $\Z_2$& $\Z_2\times\Z_2$& $\Z_3$ & $D_4$ & $D_6$&$\cS_3$& $\cS_4$ & $\cS_5$ & $\cS_6$\\
\hline
$\sfrac{\IndGen(\cS_2)}{\sim}$ &    & 1  &  &  &  & & &  & & \\
\hline
$\sfrac{\IndGen(\cS_3)}{\sim}$  & 1 & 1  &  &  &  &  & &  & &\\
\hline
$\sfrac{\IndGen(\cS_4)}{\sim}$ & 8 & 5  &  &  &  & &1   &  & &\\
\hline
$\sfrac{\IndGen(\cS_5)}{\sim}$  & 150 & 25  &  & 1 & &  &1  &1  & & \\
\hline
$\sfrac{\IndGen(\cS_6)}{\sim}$ & 7931 & 645  & 11 & 6 & 4 & &  20&   2 &2 &\\
\hline
$\sfrac{\IndGen(\cS_7)}{\sim}$ & 121426 & 4846  & 78 & 7 & 7 &7 & 134 & 8 &1 & 1\\
\hline
\end{tabular}
\end{center}
\caption{Symmetry groups of independent generating sets of symmetric groups. It is a special property of a set of group elements to have nontrivial symmetries.}
\label{tab:symms}
\end{table}

Most independent generating sets have no symmetries, i.e.~their normalizers are trivial (Table \ref{tab:symms}).
\todo{describe some interesting ones, explain 2 for S6}

\section{Combinatorial Approach: Incremental generating sets}

 For chasing a recursive counting formula of independent generating sets of $\cS_n$, we need a systematic method to build these objects.
This can be achieved for a special class of generating sets.
We look at how to build a generating set for $\cS_n$ if we have one for $\cS_{n-1}$.
\begin{lemma}
Let $A$ be a generating set for $\cS_{n-1}$ naturally acting on
$\mathbf{n-1}$, then $\langle A\cup\{g\}\rangle=\cS_n$ for any $g\in
\cS_n$ naturally acting on $\mathbf{n}$  such that $g\notin \Stab_n$.
\end{lemma}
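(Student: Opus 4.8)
The plan is to prove this by a short orbit--stabilizer count, after first pinning down that $H:=\langle A\cup\{g\}\rangle$ is a genuine subgroup of $\cS_n$. Since the paper uses semigroup generation, I would begin with the small observation that every nonempty subsemigroup $S$ of a finite group is a subgroup: for $s\in S$ of order $d$ we have $s^{-1}=s^{d-1}\in S$ and hence the identity $s s^{-1}\in S$, so $S$ is closed under inverses. In particular $H\le\cS_n$, and likewise $\langle A\rangle=\cS_{n-1}$ as a \emph{group}, where throughout I regard $\cS_{n-1}$ as the pointwise stabilizer of $n$ inside $\cS_n$, i.e.\ the subgroup of permutations acting on $\B{n-1}$.

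Next I would identify a large point stabilizer and establish transitivity. Because $A$ generates $\cS_{n-1}$ and every element of $\cS_{n-1}$ fixes $n$, we get $\cS_{n-1}\le\Stab_H(n)$, so $|\Stab_H(n)|\ge(n-1)!$. For transitivity, write $j=g(n)$; the hypothesis $g\notin\Stab_n$ gives $j\neq n$, so $j\in\B{n-1}$. Since $\cS_{n-1}\le H$ is transitive on $\B{n-1}$, for any $i\in\B{n-1}$ there is $h\in\cS_{n-1}$ with $h(j)=i$, and then $hg$ sends $n$ to $i$; combined with the fact that $n$ is in its own orbit, the $H$-orbit of $n$ is all of $\B{n}$ and has size $n$.

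Finally, orbit--stabilizer gives $|H|=|H\cdot n|\cdot|\Stab_H(n)|\ge n\cdot(n-1)!=n!$, and since $H\le\cS_n$ we conclude $H=\cS_n$. I do not expect any genuine obstacle here: the argument is essentially the standard "a transitive subgroup containing a point stabilizer of $\cS_n$ is everything" fact. The only subtlety worth spelling out explicitly — and the reason I would not simply take $\langle\,\rangle$ to mean "subgroup generated by" — is the semigroup-versus-group distinction, which the opening subsemigroup remark disposes of once and for all.
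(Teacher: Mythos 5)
Your proof is correct, and it takes a genuinely different route from the paper's. The paper argues constructively: it peels off the cycles of $g$ not containing $n$ by multiplying with their inverse $q^{-1}\in\cS_{n-1}$, then shortens the remaining cycle $(i_1\ldots i_k n)$ by multiplying with $(i_k\ldots i_1)\in\cS_{n-1}$ to land on a transposition $(i_k n)$, and finally invokes the base point generating set of Proposition~\ref{thm:basepoint}. That version produces explicit words in the generators and ties the lemma back to the standard generating sets discussed earlier, but it requires a three-step case analysis. Your orbit--stabilizer argument is uniform and shorter: $H=\langle A\cup\{g\}\rangle$ contains the point stabilizer $\cS_{n-1}=\Stab_{\cS_n}(n)$ of order $(n-1)!$, and is transitive on $\B{n}$ because $g$ moves $n$ into $\B{n-1}$ where $\cS_{n-1}$ already acts transitively, so $|H|\ge n\cdot(n-1)!=n!$. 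This isolates the real content of the lemma (a subgroup of $\cS_n$ that is transitive and contains a point stabilizer is all of $\cS_n$) at the cost of not exhibiting explicit expressions. Your opening remark that a nonempty subsemigroup of a finite group is a subgroup is also a worthwhile addition: the paper's proof silently uses $q^{-1}$ and $(i_k\ldots i_1)$ as available elements, which is justified only because $\langle A\rangle=\cS_{n-1}$ is a group by hypothesis, and your observation settles the semigroup-versus-group issue once and for all.
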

\begin{proof}
Let $g$ be a transposition of the form $(in)$,
$i\in\mathbf{n-1}$. Then by taking transpositions $(ik)$,
$k\in\mathbf{n-1}\setminus\{i\}$ from $\cS_{n-1}$ we have a
basepoint generating set for $\cS_n$.

\noindent For an arbitrary cycle, let $g=(i_1\ldots i_k n)$.
Then $g(i_k\ldots i_1)=(i_kn)$, i.e.~we multiply $g$
by an element of $\cS_{n-1}$, and we get a transposition of the form
of the previous case.
\noindent For the general case, $g$ consists of several cycles. Let $g=pq$ where $p$ is the cycle containing $n$ and $q$ is the product of some disjoint cycles of $\mathbf{n-1}$. Then since $q^{-1}\in\cS_{n-1}$ we can generate $p=gq^{-1}$, thus we have a single cycle containing $n$.
\end{proof}

\begin{definition}[Incremental generating set] Let $A\in\Ind_{\cS_n}$, then $A$ is \emph{incremental} if $\exists a\in A$ such that $\langle A\setminus \{a\}\rangle\cong\cS_{n-1}$.
\end{definition}

\begin{definition}[Strongly incremental generating set] Let $A\in\Ind_{\cS_n}$, then $A$ is \emph{strongly incremental} if $\forall a\in A$ such that $\langle A\setminus \{a\}\rangle\cong\cS_{n-1}$.
\end{definition}

The base point generating set,  $\{(1,2),(1,3),\ldots,(1,n)\}$, has this property recursively on all of its subsets.

\begin{center}
\begin{tabular}{|c|c|c|c|c|c|c|}
\hline
symmetric groups  & $\cS_1$& $\cS_2$& $\cS_3$& $\cS_4$& $\cS_5$ & $\cS_6$\\
\hline
\#incremental  & 0 & 1 & 2 &  9 & 92& 6907\\
\hline
\end{tabular}
\end{center}

So if we know $|\Ind_{\cS_{n-1}}|$ then we get independent generating sets by adding elements of $\cS_n\setminus\Stab_n$.
Unfortunately not all of them are independent, since the added element may interfere with the irreducubility of the smaller generating set.

\section{Conclusions}

The computational enumeration was done by the \SubSemi~\cite{subsemi}
and \BioGAP~\cite{biogap} (for calculating minimal and maximal
diameters) packages for the \GAP~computer algebra system \cite{GAP4}.
The \SubSemi~package uses multiplication table representations,
therefore it is capable of calculating independent subsets of
semigroups as well, not just groups.

For checking the results, the number of independent subsets of symmetric groups was also
enumerated independently using the \Magma~computer algebra system \cite{magma}.
By a different graph search algorithm and using the permutation group
representation directly, the values of $|\Ind(\cS_n)|$
were calculated up to $n=6$.
Then, using the set of conjugacy class representatives $\sfrac{\Ind(\cS_n)}{\sim}$
constructed by \SubSemi, we enumerated the conjugacy classes, and by
summing
their cardinalities we got the same values of $|\Ind(\cS_n)|$, but by a
different method.

These software tools produced huge amount of data, which in turn
yielded interesting observations and raised new combinatorial
questions providing novel directions for further theoretical work.

\subsection*{Acknowledgement}

Most of this work was carried out at the Centre for Research in
Mathematics, Western Sydney University, and greatly benefited from
discussions with Andrew R. Francis.

\bibliography{../compsemi}
\bibliographystyle{plain}

\appendix
\section{Independent generating sets of $\cS_4$ up to conjugation}
The `well-known' generating sets are highlighted.
\begin{align*}
\sfrac{\IndGen(\cS_4)}{\sim}=\{& \mathbf{ \{ (3,4), (2,3), (1,2) \} },\quad \{ (3,4), (2,3), (1,2)(3,4) \},\\
& \mathbf{ \{(3,4), (2,3), (1,3)\} } ,\quad  \{ (3,4), (2,3), (1,3)(2,4) \},\\
& \{ (3,4), (2,3,4), (1,2)(3,4) \},\\
& \{ (3,4), (2,3,4), (1,3,4) \},\\
& \{ (3,4), (2,3,4), (1,3)(2,4) \},\\
& \{ (3,4), (2,3,4), (1,4,3) \},\\
& \{ (3,4), (2,3,4), (1,4)(2,3) \},\quad \{ (3,4), (1,2,3) \},\\
& \mathbf{ \{ (3,4), (1,2,3,4) \} },\quad \{ (2,3,4), (1,2,3,4) \},\\
& \{ (2,3,4), (1,2,4,3) \}, \quad\{ (1,2,3,4), (1,2,4,3) \}\}
\end{align*}

\end{document}